\documentclass[10pt, reqno]{amsart}
\usepackage{fancyhdr}
\usepackage{a4wide, amsthm, amscd, amsfonts, amssymb, graphicx,tikz, color, environ}
\usepackage{amsaddr}
\usepackage{amsmath,amssymb,amsfonts,amsthm, graphicx,color,fancyhdr}
\usepackage[latin1]{inputenc}
\usepackage{enumerate}

\usepackage{psfrag}
\usepackage[colorlinks=true, pdfstartview=FitV, allcolors=black]{hyperref}
\usepackage{psfrag,caption}
\usepackage[all]{xy}
\usepackage{comment}
\usepackage{appendix}
\usepackage{verbatim}
\usepackage{mathrsfs}
\usepackage{pdfsync}
\usepackage{graphicx,epsfig}
\newtheorem{thm}{Theorem}[section]

\theoremstyle{stylename}

\let\oldproofname=\proofname
\renewcommand{\proofname}{\rm\bf{\oldproofname}}

 \newtheorem{lemma}[thm]{Lemma}

\theoremstyle{definition}
 
 \newtheorem{rmk}[thm]{Remark}

\subjclass[2010]{Primary  35P15; Secondary  58J50}
\begin{document}

\title[An isoperimetric inequality for Steklov eigenvalues]
 {An isoperimetric inequality for harmonic mean of Steklov eigenvalues in Hyperbolic space}
\author{Sheela Verma}
\address{Tata Institute of Fundamental Research \\ Centre For Applicable Mathematics \\ Bangalore, India}
\email{sheela.verma23@gmail.com}


\begin{abstract}
 In this article, we prove an isoperimetric inequality for the harmonic mean of the first $(n-1)$ nonzero Steklov eigenvalues on bounded domains in $n$-dimensional Hyperbolic space. Our approach to prove this result also gives a similar inequality for the first $n$ nonzero Steklov eigenvalues on bounded domains in $n$-dimensional Euclidean space.
\end{abstract}
\keywords{Isoperimetric inequality, Steklov eigenvalue problem, Exponential map, geodesic normal coordinate system}
\maketitle
\section{Introduction}
Let $\Omega$ be a bounded domain in a complete Riemannian manifold $(M, ds^{2})$ with smooth boundary $\partial \Omega$. Consider the Steklov eigenvalue problem on $\Omega$
\begin{align*}
\begin{array}{rcll}
\Delta u &=& 0 & \mbox{ in } \Omega ,\\
\frac{\partial u}{\partial \nu} &=& \mu u  &\mbox{ on } \partial \Omega.
\end{array}
\end{align*}
Here $\Delta := - \mbox{div}(\mbox{grad } u)$, $\nu$ is the outward unit normal to $\partial \Omega$ and $\frac{\partial u}{\partial \nu}$ denotes the directional derivative of $u$ in the direction $\nu$.

This problem was introduced by Steklov \cite{S} for bounded domains in the plane in $1902$. Its importance lies in the fact that the set of eigenvalues of the Steklov problem is same as the set of eigenvalues of the well known Dirichlet-Neumann map. This map associates each function $u$ defined on $\partial \Omega$ to the normal derivative of its harmonic extension on $\Omega$. It is known that the Steklov eigenvalues are discrete and forms an increasing sequence $0 = \mu_{0}(\Omega) < \mu_{1}(\Omega) \leq \mu_{2}(\Omega) \leq \cdots \nearrow \infty$.

The interplay between the geometry of manifold and the Steklov eigenvalues has recently attracted substantial attention. See \cite{CGG,GLS,GP,PS,SV,SVGS} and the references therein for recent development. The problem of finding a domain under some geometric constraints, which optimizes eigenvalues (or some combination of eigenvalues) is a classical question in spectral geometry. In this direction, for Steklov eigenvalue, the first result was given by Weinstock \cite{W} in $1954$. Using conformal map technique, he proved that among all simply connected planar domains with analytic boundary of fixed perimeter, the circle maximizes $\mu_{1}$. Hersch and Payne \cite{HP} noticed that Weinstock's proof gives a sharper isoperimetric inequality
\begin{align} \label{result HP}
\frac{1}{\mu_{1}(\Omega)} + \frac{1}{\mu_{2}(\Omega)} \geq \frac{P(\Omega)}{\pi},
\end{align}
where $P(\Omega)$ represents the perimeter of $\Omega \subset \mathbb{R}^{2}$.

Later F. Brock \cite{B} generalized result eqref{result HP} to $\mathbb{R}^{n}$ by fixing the volume of the domain and proved the following inequality: for a bounded Lipschitz domain $\Omega \subset \mathbb{R}^{n}$,
\begin{align} \label{ineq: FB}
\sum_{i=1}^{n} \frac{1}{\mu_{i}(\Omega)} \geq \frac{n}{\mu_{1}(B(R)},
\end{align}
where $B(R) \subset \mathbb{R}^{n}$ is a ball of radius $R$ such that vol$(\Omega))$ = vol$(B(R))$.

In this paper, we extend Brock's result and prove an isoperimetric inequality for the sums of reciprocals of the first $(n-1)$ nonzero steklov eigenvalues on bounded domains in $n$-dimensional Hyperbolic space. Further, our technique also provides a geometric proof of inequality \eqref{ineq: FB}. To the best of our knowledge, this is the first attempt to study the harmonic mean of Steklov eigenvalues on bounded domains in hyperbolic space. The main results of this article are as follows.

\begin{thm} \label{thm: hyperbolic space}
Let $\mathbb{H}^{n}$ be an $n$-dimensional hyperbolic space with constant curvature $-1$ and $\Omega \subset \mathbb{H}^{n}$ be a bounded domain with smooth boundary $\partial\Omega$. Then
\begin{align*}
 \sum_{i=1}^{n-1} \frac{1}{\mu_{i}(\Omega)} \geq \sum_{i=1}^{n-1} \frac{1}{\mu_{i}(B(R))},
\end{align*}
where $B(R) \subset \mathbb{H}^{n}$ be a geodesic ball of radius $R>0$ such that vol$(\Omega)$ = vol$(B(R))$. Further, equality holds if and only if $\Omega$ is a geodesic ball.
\end{thm}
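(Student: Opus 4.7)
The plan is to adapt F. Brock's Euclidean argument for $\sum 1/\mu_{i}$ to hyperbolic space, replacing the linear coordinate functions by their natural analogues built from the exponential map. The key variational input is: if $\{u_{i}\}_{i=1}^{k}\subset H^{1}(\Omega)$ are orthogonal to the constants on $\partial\Omega$ (i.e., $\int_{\partial\Omega}u_{i}\,dA=0$) and mutually $S$-orthogonal (i.e., $\int_{\Omega}\langle\nabla u_{i},\nabla u_{j}\rangle\,dV=0$ for $i\neq j$), then applying Poincar\'e / Cauchy interlacing to the generalized Steklov eigenvalue problem restricted to $\mathrm{span}(u_{i})$ yields
\[
\sum_{i=1}^{k}\frac{1}{\mu_{i}(\Omega)}\;\geq\;\sum_{i=1}^{k}\frac{\int_{\partial\Omega}u_{i}^{2}\,dA}{\int_{\Omega}|\nabla u_{i}|^{2}\,dV}.
\]

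Next I would build the test functions from the exponential map. Fix a basepoint $p\in\mathbb{H}^{n}$ (to be determined) and use geodesic polar coordinates $(r,\theta)\in[0,\infty)\times S^{n-1}$ centered at $p$. Define the radial profile
\[
\psi(r):=\sinh^{1-n}(r)\int_{0}^{r}\sinh^{n-1}(s)\,ds,
\]
the unique regular-at-origin solution for which $\psi(r)Y(\theta)$ is harmonic on $\mathbb{H}^{n}$ whenever $Y$ is a degree-one spherical harmonic, and set $u_{i}(x):=\psi(r)\theta_{i}$ for $i=1,\ldots,n$, where $\theta_{1},\ldots,\theta_{n}$ are the Cartesian coordinates on $S^{n-1}\subset\mathbb{R}^{n}$. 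A Brouwer / degree-theoretic argument (the hyperbolic analogue of Brock's center-of-mass translation) produces a point $p$ for which $\int_{\partial\Omega}u_{i}\,dA=0$ for every $i$, and a rotation of the orthonormal frame at $T_{p}\mathbb{H}^{n}$ then simultaneously diagonalizes the symmetric matrix $S_{ij}:=\int_{\Omega}\langle\nabla u_{i},\nabla u_{j}\rangle\,dV$, so that $\{u_{i}\}$ becomes $S$-orthogonal.

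Applying the variational principle above to any $n-1$ of the $u_{i}$ gives
\[
\sum_{i=1}^{n-1}\frac{1}{\mu_{i}(\Omega)}\;\geq\;\sum_{i=1}^{n-1}\frac{\int_{\partial\Omega}u_{i}^{2}\,dA}{\int_{\Omega}|\nabla u_{i}|^{2}\,dV}.
\]
On the ball $B(R)$, spherical symmetry makes each $u_{i}$ a genuine Steklov eigenfunction with common eigenvalue $\mu_{1}(B(R))=\psi'(R)/\psi(R)$, so every ratio equals $1/\mu_{1}(B(R))$ and the right-hand side is exactly $(n-1)/\mu_{1}(B(R))=\sum_{i=1}^{n-1}1/\mu_{i}(B(R))$. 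For general $\Omega$ with $\mathrm{vol}(\Omega)=\mathrm{vol}(B(R))$ I would prove the same lower bound by exploiting the identities $\sum_{i}\theta_{i}^{2}=1$ and $\sum_{i}|\nabla_{S^{n-1}}\theta_{i}|^{2}=n-1$ to reduce the right-hand side to weighted integrals of $\psi(r)^{2}$ on $\partial\Omega$ and of $\psi'(r)^{2}+(n-1)\psi(r)^{2}/\sinh^{2}(r)$ on $\Omega$, and then comparing with the corresponding integrals on $B(R)$ via the coarea formula and a symmetric-decreasing rearrangement in the variable $r=\mathrm{dist}(p,\cdot)$. Tracking equality in each step forces $\Omega=B(R)$, yielding the rigidity statement.

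The main obstacle is this last comparison. In Euclidean space the identity $|\nabla x_{i}|^{2}\equiv 1$ collapses $\int_{\Omega}|\nabla u_{i}|^{2}\,dV$ to $\mathrm{vol}(\Omega)$, but in $\mathbb{H}^{n}$ the expression
\[
|\nabla u_{i}|^{2}=\psi'(r)^{2}\theta_{i}^{2}+\frac{\psi(r)^{2}}{\sinh^{2}(r)}|\nabla_{S^{n-1}}\theta_{i}|^{2}
\]
does not decouple cleanly from the geometry of $\Omega$; controlling the resulting sum of ratios (rather than a single ratio of sums) via rearrangement is what ultimately forces the statement to be for $n-1$ rather than all $n$ eigenvalues.
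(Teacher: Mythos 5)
Your overall architecture is reasonable and shares the paper's main ingredients: the same radial profile ($\psi$ is exactly the $g$ of \eqref{function g}, and $\psi(r)\theta_i$ is indeed harmonic), the same center-of-mass normalization to kill $\int_{\partial\Omega}u_i\,dA$, and a valid Hersch--Payne--Schiffer/Brock-type trace inequality (Ky Fan applied to the boundary form relative to the Dirichlet form) in place of the paper's successive variational characterization combined with Borsuk--Ulam. The genuine gap is exactly the step you label ``the main obstacle'': you never prove
\[
\sum_{i\in I}\frac{\int_{\partial\Omega}u_i^2\,dA}{\int_\Omega\|\nabla u_i\|^2\,dV}\;\geq\;\frac{n-1}{\mu_1(B(R))}
\]
for some $(n-1)$-element subset $I$. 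The identities $\sum_i\theta_i^2=1$ and $\sum_i|\nabla_{S^{n-1}}\theta_i|^2=n-1$ control only the sum of the numerators and the sum of the denominators; a sum of ratios with unequal, uncontrolled denominators $b_i=\int_\Omega\bigl(\psi'(r)^2\theta_i^2+\frac{\psi(r)^2}{\sinh^2 r}(1-\theta_i^2)\bigr)dV$ does not reduce to a ratio of sums, and a symmetric-decreasing rearrangement in $r$ says nothing about how the boundary mass $\int_{\partial\Omega}\psi^2\theta_i^2\,dA$ is distributed among the directions (for a long thin domain one numerator can carry almost all the boundary mass, and it need not be paired with the denominator you discard). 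Crude repairs fail: Cauchy--Schwarz gives $\sum a_i/b_i\geq(\sum a_i)^2/\sum a_ib_i$, which is not comparable to the target, and bounding each $b_i$ by $\int_\Omega\psi^2/\sinh^2 r\,dV$ loses too much since $n\,\psi^2/\sinh^2 r\geq(n-1)\psi^2/\sinh^2 r+\psi'^2$ pointwise, i.e.\ the wrong direction.

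The paper's mechanism for this collapse is tied to its choice of variational principle and has no counterpart in your Ky Fan setup: because each test function $g(r)x_i/r$ is inserted into the characterization of $\mu_i(\Omega)$ individually (the Borsuk--Ulam rotations enforce \eqref{eqn:test function}), the cross terms $\bigl((g')^2-\frac{g^2}{\sinh^2 r}\bigr)\frac{x_i^2}{r^2}$ appear weighted by the \emph{ordered} numbers $1/\mu_i(\Omega)$. Writing $\sum_{i=1}^{n}\frac{1}{\mu_i}\frac{x_i^2}{r^2}=\sum_{i=1}^{n-1}\bigl(\frac{1}{\mu_i}-\frac{1}{\mu_n}\bigr)\frac{x_i^2}{r^2}+\frac{1}{\mu_n}$ as in \eqref{eqn: hyperbolic 1}, using $g'\leq g/\sinh r$ (Lemma \ref{lem: property g}(i)) to discard the nonpositive cross terms, and then $\frac{1}{\mu_n}\leq\frac{1}{n-1}\sum_{i=1}^{n-1}\frac{1}{\mu_i}$, reduces everything to $\frac{1}{n-1}\sum_{i=1}^{n-1}\frac{1}{\mu_i}\int_\Omega F\,dV$ with $F=(n-1)g^2/\sinh^2 r+(g')^2$; the monotonicity of $F$ (Lemma \ref{lem: property g}(ii)) together with $\mathrm{vol}(\Omega)=\mathrm{vol}(B(R))$ gives $\int_\Omega F\,dV\leq\int_{B(R)}F\,dV$, and Lemma \ref{lem: integral g} handles the boundary integral; equality in these two comparisons is also what yields the rigidity, which you only assert. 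So either switch to per-index variational bounds of this type, where the ordered weights $1/\mu_i$ are available, or supply a genuinely new inequality for your sum of ratios; as written the decisive step is missing.
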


\begin{thm} \label{thm: Euclidean space}
Let $\Omega \subset \mathbb{R}^{n}$ be a bounded domain with smooth boundary $\partial\Omega$ and $B(R) \subset \mathbb{R}^{n}$ be a geodesic ball of radius $R>0$ such that vol$(\Omega)$ = vol$(B(R))$. Then
\begin{align*}
 \sum_{i=1}^{n} \frac{1}{\mu_{i}(\Omega)} \geq n R = \sum_{i=1}^{n} \frac{1}{\mu_{i}(B(R))}.
\end{align*}
Further, equality holds if and only if $\Omega$ is a geodesic ball.
\end{thm}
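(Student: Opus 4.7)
My plan is to adapt Brock's trial-function strategy by combining a Jensen-based dual variational inequality with a divergence-theorem calculation and Schwarz symmetrization. On $B(R)\subset\mathbb{R}^{n}$ the coordinate functions $x_{1},\ldots,x_{n}$ are Steklov eigenfunctions with common eigenvalue $1/R$, so $\sum_{i=1}^{n}1/\mu_{i}(B(R))=nR$. For a general $\Omega$ I would choose the test functions $\phi_{i}(x)=x_{i}-a_{i}$, where $a=|\partial\Omega|^{-1}\int_{\partial\Omega}x\,dA$ is the centroid of $\partial\Omega$, so that $\int_{\partial\Omega}\phi_{i}\,dA=0$ for every $i$. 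A rotation of coordinates diagonalizes the symmetric matrix $M_{ij}=\int_{\partial\Omega}\phi_{i}\phi_{j}\,dA$, and in those coordinates the $\phi_{i}$ become mutually $L^{2}(\partial\Omega)$-orthogonal. Each $\phi_{i}$ is harmonic with $|\nabla\phi_{i}|\equiv 1$, hence $\int_{\Omega}|\nabla\phi_{i}|^{2}\,dV=|\Omega|$.

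Next, expanding each boundary trace $\phi_{i}|_{\partial\Omega}=\sum_{j}c_{ij}u_{j}$ in the $L^{2}(\partial\Omega)$-orthonormal Steklov eigenbasis $\{u_{j}\}_{j\geq 1}$, the mutual orthogonality of the $\phi_{i}$ implies that after rescaling rows by $\|\phi_{i}\|_{L^{2}(\partial\Omega)}$, the matrix $(c_{ij})$ has orthonormal rows; in particular its column sums of squares lie in $[0,1]$ and total $n$. Applying Jensen's inequality to $t\mapsto 1/t$ with weights $p_{ij}=c_{ij}^{2}/\|\phi_{i}\|^{2}$ and summing over $i$ yields the dual variational bound
\begin{equation*}
\sum_{i=1}^{n}\frac{1}{\mu_{i}(\Omega)} \geq \sum_{i=1}^{n}\frac{\int_{\partial\Omega}\phi_{i}^{2}\,dA}{\int_{\Omega}|\nabla\phi_{i}|^{2}\,dV} = \frac{1}{|\Omega|}\int_{\partial\Omega}|x-a|^{2}\,dA.
\end{equation*}

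The remaining task, which I expect to be the main obstacle, is the purely geometric inequality $\int_{\partial\Omega}|x-a|^{2}\,dA\geq nR|\Omega|$. Applying the divergence theorem to the vector field $V=|x-a|(x-a)$, whose divergence equals $(n+1)|x-a|$, gives
\begin{equation*}
(n+1)\int_{\Omega}|x-a|\,dV = \int_{\partial\Omega}|x-a|\,(x-a)\cdot\nu\,dA \leq \int_{\partial\Omega}|x-a|^{2}\,dA.
\end{equation*}
Schwarz symmetrization (the bathtub principle for the radial weight $|x-a|$) shows that among all measurable sets of volume $|\Omega|$, the integral $\int_{\Omega}|x-a|\,dV$ is minimized by the ball of radius $R$ centered at $a$, where it equals $nR|\Omega|/(n+1)$. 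Chaining these two bounds gives $\int_{\partial\Omega}|x-a|^{2}\,dA\geq nR|\Omega|$, and combining with the previous display delivers $\sum_{i=1}^{n}1/\mu_{i}(\Omega)\geq nR$.

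Equality in the divergence-theorem step forces $\nu\parallel(x-a)$ pointwise on $\partial\Omega$, and equality in Schwarz symmetrization forces $\Omega=B(R;a)$; either condition forces $\Omega$ to be a geodesic ball, which conversely realizes equality. The subtlest point is the Jensen-based variational inequality, since the linear functions $\phi_{i}$ are not Steklov eigenfunctions on a general $\Omega$; nevertheless the combinatorial argument still delivers a sum-of-reciprocals lower bound that is sharp at the ball and, together with the two-step geometric inequality, suffices to conclude.
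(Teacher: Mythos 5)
Your proposal is correct, but it reaches the two key steps by a genuinely different route than the paper. The trial functions are essentially the same (in the Euclidean case the paper's functions $g(r)x_{i}/r$ reduce to $x_{i}/n$, i.e.\ linear coordinates centered at the centroid of $\partial\Omega$), and both arguments pass through the same intermediate inequality $\sum_{i=1}^{n}1/\mu_{i}(\Omega)\ \ge\ \mathrm{vol}(\Omega)^{-1}\int_{\partial\Omega}|x-a|^{2}\,dA$; the difference is how that inequality and the subsequent geometric bound are justified. The paper enforces, via the center-of-mass Lemma \ref{lem:center of mass} and a Borsuk--Ulam rotation, the orthogonality relations \eqref{eqn:test function} of each trial function to the preceding eigenfunctions, so that the standard variational characterization \eqref{general variational characterization} applies to each $\mu_{i}(\Omega)$ separately, and it then imports the boundary inequality $\int_{\partial\Omega}r^{2}\,dA\ge R^{2}\,\mathrm{vol}(S(R))$ from \cite{BG} as Lemma \ref{lem: integral g}. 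You instead only require mutual $L^{2}(\partial\Omega)$-orthogonality (centroid plus diagonalization of the boundary Gram matrix) and recover the sum-of-reciprocals bound by the Hersch--Payne--Schiffer/Brock duality: harmonicity of the $\phi_{i}$, expansion in the Steklov basis, Jensen for $t\mapsto 1/t$, and the observation that the column sums $s_{j}$ of the squared row-normalized coefficient matrix satisfy $0\le s_{j}\le 1$ and $\sum_{j}s_{j}=n$, whence $\sum_{j}s_{j}/\mu_{j}\le\sum_{j=1}^{n}1/\mu_{j}$ --- this is essentially Brock's original argument and avoids Borsuk--Ulam entirely; conversely, your divergence-theorem-plus-bathtub proof of $\int_{\partial\Omega}|x-a|^{2}\,dA\ge nR\,\mathrm{vol}(\Omega)$ makes the Euclidean case self-contained where the paper cites \cite{BG}. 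Two points worth tightening when you write it up: in the Jensen step, state explicitly the final rearrangement $\sum_{j}s_{j}/\mu_{j}\le\sum_{j=1}^{n}1/\mu_{j}$ and note that each $\phi_{i}$ has nonzero trace on $\partial\Omega$; and in the equality case, equality in $(x-a)\cdot\nu\le|x-a|$ by itself only says every component of $\partial\Omega$ is a sphere centered at $a$ with outward normal pointing away from $a$ (one must rule out annular configurations via the sign of the normal), so it is cleaner to rely on the equality case of the bathtub step, which directly yields $\Omega=B(R;a)$.
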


The rest of the paper is organized as follows. In Section $2$, we state several known facts related to first nonzero Steklov eigenvalue of geodesic ball and also prove some results which are used to prove the main theorems. Proof of Theorems \ref{thm: hyperbolic space} and \ref{thm: Euclidean space} is given in Section $3$.
\section{Preliminaries}
In this section, we begin with providing some facts related to first $n$ nonzero Steklov eigenvalues on geodesic ball in $\mathbb{H}^{n} \mbox{ and } \mathbb{R}^{n}$ and then prove some properties of its eigenfunction.

Let $M = \mathbb{H}^{n} \mbox{ or } \mathbb{R}^{n}$ with the Riemannian metric $ds^{2} = dr^{2} + \sin_{k}^{2} r g_{\mathbb{S}^{n-1}}$, where $g_{\mathbb{S}^{n-1}}$ represents the canonical metric on the $(n-1)$- dimensional unit sphere $\mathbb{S}^{n-1}$ and
\begin{align*}
\sin_{k} r =
\begin{cases}
r, & \mbox{ if }M = \mathbb{R}^{n} \\
\sinh r, & \mbox{ if }M = \mathbb{H}^{n}.
\end{cases}
\end{align*}

Define
\begin{align*}
\cos_{k} r =
\begin{cases}
1, & \mbox{ if }M = \mathbb{R}^{n} \\
\cosh r, & \mbox{ if }M = \mathbb{H}^{n}.
\end{cases}
\end{align*}


\subsection{Properties of first $n$ nonzero Steklov eigenvalues on Ball}
Let $B(R)$ be a geodesic ball in $(M, ds^{2})$, then $\mu_{1}(B(R)) = \mu_{2}(B(R)) = \cdots = \mu_{n}(B(R))$ and $\mu_{i}(B(R))$, $1 \leq i \leq n$ satisfies
\begin{align*}
\mu_{i}(B(R))= \frac{\int_{B(R)} g^{2}(r) \frac{n-1}{\sin_{k}^{2} r} + (g'(r))^{2} dV}{g^{2}(R) \mbox{vol}(S(R))},
\end{align*}
where $g(r)$ is the radial function satisfying
\begin{align} \label{eqn function g}
\begin{array}{rcll}
g''(r) + \frac{(n-1)\cos_{k} r}{\sin_{k} r} g'(r) - \frac{(n-1)}{\sin_{k}^{2} r}g(r) = 0, r \in (0,R) \\
g(0) = 0, \quad g'(R) = \mu_{1}(B(R)) g(R).
\end{array}
\end{align}
The equation $g''(r) + \frac{(n-1)\cos_{k} r}{\sin_{k} r} g'(r) - \frac{(n-1)}{\sin_{k}^{2} r}g(r) = 0$ can be rewritten as
\begin{align*}
g''(r) + \left(\frac{(n-1)\cos_{k} r}{\sin_{k} r} g(r)\right)'  = 0.
\end{align*}
Simplifying the above equation, we get
\begin{align} \label{function g}
g(r) = \frac{1}{\sin_{k}^{n-1} r} \int_{0}^{r}\sin_{k}^{n-1} t dt.
\end{align}
See \cite{BG} for further details.
\begin{rmk}
In case of $\mathbb{R}^{n}$, $g(r) = \frac{r}{n}$ and $\mu_{1}(B(R)) = \frac{1}{R}$.
\end{rmk}

The function $g$ defined in \eqref{function g} satisfies the following properties:
\begin{lemma} \label{lem: property g}
For $r > 0$,
\begin{enumerate} [(i)]
\item $0 < g'(r) \leq \frac{g(r)}{\sin_{k} r}$,
\item $(g')^{2}(r) + \frac{n-1}{\sin_{k}^{2} r} g^{2}(r)$ is a decreasing function of $r$.
\end{enumerate}
\end{lemma}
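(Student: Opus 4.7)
The plan is to derive part (i) directly from the integral formula \eqref{function g} and then deduce part (ii) from (i) together with the ODE \eqref{eqn function g}.

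For part (i), I would differentiate the identity $\sin_k^{n-1}(r)\,g(r) = \int_0^r \sin_k^{n-1}(t)\,dt$ to obtain the first-order relation
\begin{equation*}
g'(r) \;=\; 1 \;-\; (n-1)\,\frac{\cos_k r}{\sin_k r}\, g(r).
\end{equation*}
Both inequalities in (i) then reduce, after clearing the positive factor $\sin_k r$, to comparisons of $\sin_k^n r$ with $c(r)\int_0^r \sin_k^{n-1}(t)\,dt$, where $c(r) = (n-1)\cos_k r$ for the sign of $g'$ and $c(r) = 1 + (n-1)\cos_k r$ for the upper bound $g'\leq g/\sin_k r$. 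Each such difference vanishes at $r=0$, so the strategy is to examine the sign of its derivative. After one differentiation and simplification (in the hyperbolic case using $(\cos_k r)' = \sin_k r$), the question reduces to the sign of an auxiliary function that again vanishes at $0$ and whose derivative collapses, via $\cosh^2 r - \sinh^2 r = 1$, to the manifestly nonnegative expression $(n-2)\sin_k^{n-3}(r)(\cos_k r - 1)$. Integrating back yields (i); in the Euclidean case the computations become exact identities, reflecting that $g(r) = r/n$.

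For part (ii), set $\phi(r) = (g'(r))^2 + \frac{n-1}{\sin_k^2 r}\,g^2(r)$. Differentiating and substituting the value of $g''$ from \eqref{eqn function g}, after collecting terms one finds
\begin{equation*}
\phi'(r) \;=\; -\,\frac{2(n-1)}{\sin_k^3 r}\,\Big[\cos_k(r)\bigl(\sin_k^2(r)(g'(r))^2 + g^2(r)\bigr) - 2\sin_k(r)\,g(r)\,g'(r)\Big].
\end{equation*}
Writing $u = \sin_k(r)\,g'(r)$ and $v = g(r)$, both positive by part (i), the bracket is the quadratic form $\cos_k(r)(u^2+v^2) - 2uv$, whose associated symmetric matrix has eigenvalues $\cos_k(r) \pm 1 \geq 0$. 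Thus the bracket is nonnegative and $\phi'(r) \leq 0$, as required.

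The main obstacle is the upper bound $g'(r) \leq g(r)/\sin_k r$ in the hyperbolic setting: the two terms produced by differentiating the relevant auxiliary function have opposite signs (since $1 - \cosh r < 0$), so monotonicity cannot be closed in one step and a second round of the ``vanish at zero plus sign of derivative'' argument is needed, with $\cosh^2 r - \sinh^2 r = 1$ used to absorb the curvature contribution. Once (i) is in hand, the content of (ii) is essentially the positive semidefiniteness of a $2\times 2$ matrix together with the positivity of $g$ and $g'$ supplied by (i).
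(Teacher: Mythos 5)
Your proposal is correct. For part (i) you follow essentially the paper's route: write $g'(r)=1-(n-1)\frac{\cos_k r}{\sin_k r}g(r)$, translate each inequality into a comparison between $\sin_k^n r$ and $c(r)\int_0^r\sin_k^{n-1}t\,dt$, and close it by the iterated ``vanishes at $0$, compare derivatives'' argument; your terminal expression $(n-2)\sin_k^{n-3}r(\cos_k r-1)$ is exactly what the paper's chain reduces to for the upper bound (for the positivity of $g'$ the analogous collapse gives $(n-2)\sin_k^{n-3}r$ rather than the expression you quote, but it is equally nonnegative and the argument is unchanged; note also that for $n=2$ one has equality $g'=g/\sinh r$, which your computation detects). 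For part (ii) you genuinely add something: the paper gives no proof, merely asserting ``similar argument as in (i)'' and citing the reference, whereas you differentiate $\phi=(g')^2+\frac{n-1}{\sin_k^2 r}g^2$, eliminate $g''$ via the ODE, and obtain
\begin{align*}
\phi'(r)=-\frac{2(n-1)}{\sin_k^3 r}\Big[\cos_k r\big(\sin_k^2 r\,(g')^2+g^2\big)-2\sin_k r\,g\,g'\Big],
\end{align*}
whose bracket is the quadratic form with matrix eigenvalues $\cos_k r\pm 1\ge 0$; this is a clean, self-contained proof (positivity of $u,v$ from (i) is not even needed, since the form is positive semidefinite outright). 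One small caveat: since $\phi$ is constant in the Euclidean case ($g=r/n$ gives $\phi\equiv 1/n$), ``decreasing'' in the lemma must be read as non-increasing, which is precisely what your inequality $\phi'\le 0$ delivers and is all that the application in Section 3 requires.
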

\begin{proof}
\begin{enumerate}[(i)]
\item It can be seen easily that in case of $M = \mathbb{R}^{n}$ it holds true. For $M = \mathbb{H}^{n}$: Since $g'(r) = 1 - (n-1) g(r) \coth r  $, it follows that $g'(r) \leq \frac{g(r)}{\sinh r}$ if and only if
\begin{align*}
(1 + (n-1) \cosh r) \left(\int_{0}^{r}\sinh^{n-1} t dt\right) \geq \sinh^{n} r.
\end{align*}
Let $h_{1}(r) = (1 + (n-1) \cosh r) (\int_{0}^{r}\sinh^{n-1} t dt) $ and $h_{2}(r) = \sinh^{n} r$. Since $h_{1}(0) = h_{2}(0) = 0$ and $h_{1}(r), h_{2}(r)$ both are increasing function of $r$, to show that $h_{1}(r) \geq h_{2}(r)$, it is enough to prove that ${h_{1}}'(r) \geq {h_{2}}'(r)$ for $r > 0$. Next ${h_{1}}'(r) \geq {h_{2}}'(r)$ if and only if
\begin{align*}
 (n-1) \int_{0}^{r}\sinh^{n-1} t \ dt \geq (\cosh r - 1) \sinh^{n-2} r.
\end{align*}
Using similar argument we have that the above inequality is true if
\begin{align*}
 \sinh^{n-1} r \geq (\cosh r - 1) \sinh^{n-3} r \cosh r.
\end{align*}
This can be rewritten as $\cosh r \geq 1$. which is true. Hence $g'(r) \leq \frac{g(r)}{\sin_{k} r}$. In similar way, it can be proved that $g'(r) > 0$ for $r > 0$.
\item This can be proved by using a similar argument as in $(i)$. We refer to \cite{BG} for a complete proof.
\end{enumerate}
\end{proof}

Now using the Borsuk-Ulam theorem, we construct test functions for the first $n$ nonzero Steklov eigenvalues on bounded domain $\Omega \subset M$.
\subsection{Construction of test functions}
Let $A$ be a domain in $M$ and $CA$ denotes the convex hull of $A$. Let $\exp_{p} : T_{p}M \rightarrow M$ be the exponential map and $(X_{1}, X_{2}, \ldots, X_{n})$ be a geodesic normal coordinate system at $p$. We identify $CA$ with $\exp_{p}^{-1}(CA)$ and $ds_{p}^{2}(X,X)$ as $\|X\|_{p}^{2}$ for $X \in T_{p}(M)$. The following lemma gives the existence of a center of mass of $A$ in $M$ (see \cite{GS}).

\begin{lemma} \label{lem:center of mass}
Let $G: [0, \infty)\rightarrow \mathbb{R}$ be a continuous function which is positive on $(0, \infty)$. Then there exists a point $q \in CA$ such that
\begin{align*}
\int_{A} G(\|X\|_{q}) X ds = 0,
\end{align*}
where $(X_{1}, X_{2}, \ldots, X_{n})$ is a geodesic normal coordinate system at $q$.
\end{lemma}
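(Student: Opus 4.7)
The plan is to exhibit the vector-valued integral $\int_{A} G(\|X\|_{q})\, X\, ds$ as the gradient of a scalar energy on $CA$ and then produce a critical point by a compactness argument. Let $\phi:[0,\infty)\to\mathbb{R}$ be the $C^{1}$ antiderivative $\phi(r):=-\int_{0}^{r} tG(t)\, dt$, so that $\phi'(r)=-rG(r)$, and $\phi$ is strictly decreasing on $(0,\infty)$ with $\phi(0)=0$. Define
\begin{equation*}
\Phi(q):=\int_{A} \phi\bigl(d(q,x)\bigr)\, ds(x), \qquad q\in CA,
\end{equation*}
where $d$ is the Riemannian distance on $M$. Since $M$ is either $\mathbb{R}^{n}$ or $\mathbb{H}^{n}$, the exponential map $\exp_{q}$ is a global diffeomorphism and $d(q,\cdot)$ is smooth off the diagonal, so $\Phi$ is continuous on $CA$ and $C^{1}$ on its interior. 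A standard Gauss lemma computation gives $\nabla_{q} d(q,x) = -X/\|X\|_{q}$ with $X=\exp_{q}^{-1}(x)\in T_{q}M$. Differentiating under the integral sign,
\begin{equation*}
\nabla_{q}\Phi(q)=\int_{A} \phi'(\|X\|_{q})\,\Bigl(-\tfrac{X}{\|X\|_{q}}\Bigr)\, ds(x)=\int_{A} G(\|X\|_{q})\, X\, ds(x),
\end{equation*}
so every interior critical point of $\Phi$ provides the required point $q$.

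Since $A$ is bounded, the geodesic convex hull $CA$ is compact, and $\Phi$ attains its maximum at some $q_{0}\in CA$; the task reduces to showing $q_{0}\notin \partial(CA)$. At any boundary point $q\in\partial(CA)$, geodesic convexity yields a supporting half-space in $T_{q}M$: there exists a nonzero $v\in T_{q}M$ with $\langle w,v\rangle\le 0$ for every $w$ in the tangent cone to $CA$ at $q$. For each $x\in A\subset CA$ the initial velocity $X/\|X\|_{q}$ of the minimising geodesic from $q$ to $x$ lies in this tangent cone, so $\langle X,v\rangle\le 0$. Together with $G>0$ on $(0,\infty)$ this gives
\begin{equation*}
\langle \nabla_{q}\Phi(q),v\rangle=\int_{A} G(\|X\|_{q})\,\langle X,v\rangle\, ds\ \le\ 0,
\end{equation*}
and the inequality is strict because the open set $A$ cannot be contained in the totally geodesic hyperplane $\exp_{q}(v^{\perp})$. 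Hence $\Phi$ strictly increases at $q$ in the inward direction $-v$, contradicting maximality. Therefore $q_{0}$ lies in the interior of $CA$ and $\nabla_{q_{0}}\Phi=0$, which is precisely the stated identity.

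The main obstacle I anticipate is not the variational step itself but the rigorous handling of $\partial(CA)$ in the curved setting: the boundary need not be smooth, so ``outward direction'' has to be interpreted through a supporting tangent cone rather than a literal normal vector. This relies on the fact that geodesic convex hulls in non-positively curved, simply connected manifolds are compact and admit a supporting tangent half-space at every boundary point, which is standard Cartan--Hadamard geometry. Once this is granted, the energy argument above runs as written.
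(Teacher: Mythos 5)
The paper itself does not prove this lemma (it only cites \cite{GS}), so your proof stands or falls on its own. Your variational setup is sound: the potential $\Phi(q)=\int_A\phi(d(q,x))\,ds$ with $\phi'(r)=-rG(r)$ is $C^1$ (the integrand is differentiable even across $q=x$ because $\phi'(r)=O(r)$), and the Gauss-lemma computation $\nabla_q\Phi(q)=\int_A G(\|X\|_q)X\,ds$ is correct. The genuine gap is in the boundary-exclusion step. From $\langle\nabla_q\Phi(q),v\rangle<0$, with $v$ only known to lie in the polar (normal) cone of the tangent cone of $CA$ at $q$, you conclude that $\Phi$ increases in the direction $-v$ and that this ``contradicts maximality.'' But maximality of $\Phi$ on $CA$ is only violated by an increase along an \emph{admissible} direction, i.e.\ one in the tangent cone, and $-v$ need not be admissible at a non-smooth boundary point: at the apex of a narrow circular cone, the inward normal of a supporting hyperplane tangent along a ruling lies outside the cone, so the geodesic $t\mapsto\exp_q(-tv)$ leaves $CA$ instantly. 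Worse, the constrained first-order condition at a boundary maximum only says that $\nabla\Phi(q_0)$ lies in the normal cone, and normal cones at corners contain pairs of vectors with negative inner product, so $\langle\nabla\Phi(q_0),v\rangle<0$ for your particular $v$ is perfectly compatible with $q_0$ being a maximum. As written, no contradiction is obtained, and this is exactly the non-smoothness issue you flagged but did not actually resolve.

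The fix stays entirely inside your framework and is simpler than what you attempted: let $q_0$ maximize $\Phi$ on the (closed, compact) convex hull. For every $x\in A\subset CA$ the geodesic from $q_0$ to $x$ stays in $CA$ by convexity, so $t\mapsto\Phi(\exp_{q_0}(tX/\|X\|_{q_0}))$ is non-increasing at $t=0^+$, giving $\langle\nabla\Phi(q_0),X\rangle\le 0$ for every $X=\exp_{q_0}^{-1}(x)$, $x\in A$. Multiplying by $G(\|X\|_{q_0})\ge 0$ and integrating over $A$ yields
\begin{align*}
0\ \ge\ \int_A G(\|X\|_{q_0})\,\langle\nabla\Phi(q_0),X\rangle\,ds\ =\ \Bigl\langle\nabla\Phi(q_0),\int_A G(\|X\|_{q_0})X\,ds\Bigr\rangle\ =\ \|\nabla\Phi(q_0)\|^2,
\end{align*}
hence $\nabla\Phi(q_0)=0$, which is exactly the assertion; note the lemma only claims $q\in CA$, so no interiority of $q_0$ (and no discussion of supporting hyperplanes, tangent cones, or boundary regularity of $CA$) is needed at all. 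With this repair your argument is a legitimate, self-contained alternative to the topological existence proof referenced by the paper.
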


The above lemma will be used to construct a test function for the first nonzero Steklov eigenvalue on bounded domains in $M$. Let $\Omega$ be a bounded domain in $M$ and $B(R)$ be a ball in $M$ such that vol$(\Omega)$=vol$(B(R))$. We denote by $\{u_{i}\}_{i=0}^{\infty}$, a sequence of orthonormal set of eigenfunctions corresponding to eigenvalues $\{\mu_{i}(\Omega)\}_{i=0}^{\infty}$. The variational characterization of $\mu_i(\Omega)$, $1 \leq i < \infty$ is given by
\begin{align} \label{general variational characterization}
\mu_{i}(\Omega)= \inf_{0 \neq u \in H^{1}(\Omega)} \left\{ \frac{\int_\Omega{\|\nabla u\|^2}\, dV}{\int_{\partial\Omega}{u^2}\, dA} : \int_{\partial\Omega} u u_{j} dA = 0, 0 \leq j \leq {i-1} \right\}.
\end{align}
 Then by Lemma \ref{lem:center of mass}, there exists a point $p \in C(\partial \Omega)$ such that
\begin{align*}
\int_{\partial \Omega} g(r) \frac {x_{i}}{r} dA = 0, \quad 1 \leq i \leq n.
\end{align*}
Here $g(r)$ is the function defined in \eqref{function g}, $(x_{1}, x_{2}, \ldots, x_{n})$ is a geodesic normal coordinate system at $p$ and $r = \|\exp_{p}^{-1}(q)\|_{p}$, $q \in \partial \Omega$.

Next define mapping $f_{n}: \mathbb{S}^{n-1}\rightarrow \mathbb{R}^{n-1}$ as
\begin{align*}
f_{n}(\sigma) = \left(\int_{\partial \Omega} g(r) \frac {\langle\exp_{p}^{-1}(q), \sigma \rangle}{r} u_{1} dA, \int_{\partial \Omega} g(r) \frac {\langle\exp_{p}^{-1}(q), \sigma \rangle}{r} u_{2} dA, \ldots, \int_{\partial \Omega} g(r) \frac {\langle\exp_{p}^{-1}(q), \sigma \rangle}{r} u_{n-1} dA\right)
\end{align*}
 Since the function $f_{n}$ is antipode preserving, by the Borsuk-Ulam theorem, there exists $\sigma_{n} \in \mathbb{S}^{n-1}$ such that $f_{n}(\sigma_{n})=0$. Then we repeat the similar process with functions $f_{n-1}, f_{n-2}, \ldots, f_{2}$ in sequence, where the function $f_{m}: \mathbb{S}^{m-1}\rightarrow \mathbb{R}^{m-1}$, $2 \leq m \leq (n-1)$ is defined componentwise as
\begin{align*}
f_{m,k}(\sigma) = \int_{\partial \Omega} g(r) \frac {\langle\exp_{p}^{-1}(q), \sigma \rangle}{r} u_{k} dA, 1 \leq k \leq (m-1).
\end{align*}
Here $\sigma \in \mathbb{S}^{m-1} $ ranges over all unit vectors perpendicular to the ones already fixed. In particular, $f_{n-1}$ acts on $\sigma \in \mathbb{S}^{n-2}$, which is perpendicular to already fixed unit vector $\sigma_{n}$ in the previous step. By repeating this inductive process, once $\sigma_{2}$ is chosen, $\sigma_{1}$ is effectively fixed.

Next we define a new normal coordinate system at $p$ by considering $\sigma_{1}, \sigma_{2}, \ldots, \sigma_{n}$ as $x_1$-axis, $x_2$-axis, \ldots, $x_n$-axis respectively. Since new coordinate system is obtained by performing rotations, we have $\int_{\partial \Omega} g(r) \frac {x_{i}}{r} dA = 0, 1 \leq i \leq n$ in this coordinate system. Also in this new coordinate system
\begin{align} \label{eqn:test function}
\int_{\partial\Omega} g(r) \frac {x_{i}}{r} u_{j} dA = 0, \quad 0 \leq j \leq {i-1}.
\end{align}

\section{Proof of main results}
Let $\Omega$ be a bounded domain in $M$ with smooth boundary and $p$ be the center of mass defined as in the above section. Let $B(R)$ be the geodesic ball of radius $R>0$ in $M$ centered at $p$ such that vol$(\Omega)$ = vol$(B(R))$. We denote the geodesic sphere of radius $R>0$ in $M$ centered at $p$ by $S(R)$.
The following lemma is needed to prove main result (see \cite{BG}).
\begin{lemma} \label{lem: integral g}
 Let $g$ be the function defined as in \eqref{function g}. Then
\begin{align*}
 \int_{\partial\Omega} g^{2}(r) dA \geq \mbox{vol} \left(S(R)\right) g^{2}(R),
 \end{align*}
 and equality holds if and only if $\partial\Omega$ is a geodesic sphere of radius $R$ centered at $p$.
\end{lemma}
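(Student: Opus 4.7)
The plan is to apply the divergence theorem to a carefully chosen radial vector field on $\Omega$, and then compare the resulting interior integral with its value on $B(R)$ by a rearrangement argument based on a radial monotonicity property.

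\textbf{Step 1 (divergence identity).} I would consider the radial vector field $V = g^{2}(r)\nabla r$ on $\Omega$, where $r$ denotes the distance from the centre of mass $p$ chosen in Section 2. Since $|\nabla r| = 1$ and $\mathrm{div}(\nabla r) = (n-1)\frac{\cos_{k} r}{\sin_{k} r}$ in geodesic polar coordinates in $M$, a direct computation, together with the identity $g'(r) = 1 - (n-1)\frac{\cos_{k} r}{\sin_{k} r}\, g(r)$ that follows by integrating \eqref{eqn function g}, gives
\[
\mathrm{div}(V) = 2 g(r) g'(r) + (n-1)\frac{\cos_{k} r}{\sin_{k} r}\, g^{2}(r) = g(r)\bigl(1 + g'(r)\bigr).
\]
The divergence theorem combined with $\langle \nabla r, \nu\rangle \leq |\nabla r| = 1$ on $\partial\Omega$ then yields
\[
\int_{\Omega} g(r)\bigl(1+g'(r)\bigr)\, dV \;\leq\; \int_{\partial\Omega} g^{2}(r)\, dA,
\]
with equality iff $\nabla r = \nu$ on $\partial\Omega$, i.e., iff $\partial\Omega$ is a geodesic sphere centred at $p$.

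\textbf{Step 2 (radial monotonicity).} Next, I would show that $h(r) := g(r)\bigl(1+g'(r)\bigr)$ is nondecreasing in $r$. Differentiating, eliminating $g''$ via the ODE \eqref{eqn function g}, and again using $(n-1)\frac{\cos_{k} r}{\sin_{k} r}\, g(r) = 1 - g'(r)$, I expect the clean identity
\[
h'(r) = 2\bigl(g'(r)\bigr)^{2} + \frac{n-1}{\sin_{k}^{2} r}\, g^{2}(r),
\]
which is manifestly nonnegative; combined with $g, g' > 0$ from Lemma \ref{lem: property g}(i), this shows $h$ is positive and nondecreasing on $(0,\infty)$.

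\textbf{Step 3 (rearrangement and conclusion).} With $h$ nondecreasing and $\mathrm{vol}(\Omega) = \mathrm{vol}(B(R))$, the standard rearrangement estimate $\int_{\Omega} h(r)\, dV \geq \int_{B(R)} h(r)\, dV$ holds, with equality iff $\Omega = B(R)$ up to a null set: decompose both sides along $\Omega \cap B(R)$, note $\mathrm{vol}(\Omega \setminus B(R)) = \mathrm{vol}(B(R)\setminus\Omega)$, and use $h(r) \geq h(R)$ on $\Omega \setminus B(R)$ while $h(r) \leq h(R)$ on $B(R)\setminus\Omega$. Applying Step 1 to $B(R)$ itself (where $\nabla r = \nu$ on $\partial B(R)$, giving equality) yields $\int_{B(R)} h(r)\, dV = g^{2}(R)\, \mathrm{vol}(S(R))$, and chaining the three inequalities produces the claim, including the equality characterisation. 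The principal obstacle is guessing that $V = g^{2}(r)\nabla r$ is the correct vector field; once this is found, the ODE and the derived identity for $g'$ conspire to give the clean nonnegative formula for $h'(r)$, after which the rearrangement step is routine.
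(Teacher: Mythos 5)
Your proof is correct, and it is necessarily a different route from the paper in the trivial sense that the paper offers no proof at all: Lemma \ref{lem: integral g} is quoted from the reference [BG] (Binoy--Santhanam). Your argument is self-contained and the key computations check out: integrating the ODE \eqref{eqn function g} in the form $g'' + \bigl(\tfrac{(n-1)\cos_k r}{\sin_k r} g\bigr)' = 0$ with $g(0)=0$ gives $g' = 1 - (n-1)\tfrac{\cos_k r}{\sin_k r} g$, whence $\mathrm{div}\bigl(g^2(r)\nabla r\bigr) = 2gg' + (n-1)\tfrac{\cos_k r}{\sin_k r} g^2 = g(1+g')$, and eliminating $g''$ again via the ODE indeed yields $h'(r) = 2(g')^2 + \tfrac{n-1}{\sin_k^2 r} g^2 > 0$, so $h$ is strictly increasing; the rearrangement step and the evaluation $\int_{B(R)} h\, dV = g^2(R)\,\mathrm{vol}(S(R))$ (equality case of your Step 1 on the ball) then give the inequality and the rigidity statement. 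Two small points worth making explicit if you write this up: (i) the divergence theorem is applied to a field that is only defined away from $p$; since $g(r)=O(r)$ near $r=0$, the field $g^2(r)\nabla r$ extends continuously by $0$ and the identity survives excising a small ball around $p$ and letting its radius tend to $0$; (ii) in the equality discussion of Step 1, $\nu=\nabla r$ on all of $\partial\Omega$ rules out, e.g., an annular region, because on an inner boundary component the outward normal would be $-\nabla r$ — alternatively, note that equality in your Step 3 alone already forces $\Omega = B(R)$, which settles the characterisation. Incidentally, your monotonicity-plus-volume-comparison step mirrors the comparison the paper performs in Section 3 for the decreasing function $F(r)$, with the inequality reversed because $h$ is increasing.
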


From the variational characterization of $\mu_{i}(\Omega)$, $1 \leq i \leq n$ and \eqref{eqn:test function}, we have
\begin{align*}
\mu_{i}(\Omega) \int_{\partial\Omega} \left(g(r)\frac{x_{i}}{r}\right)^{2} dA \leq \int_{\Omega} \Big\|\nabla \left(g(r)\frac{x_{i}}{r}\right)\Big\|^{2} dV.
\end{align*}
Substituting $\|\nabla \left(g(r)\frac{x_{i}}{r}\right)\|^{2} = (g'(r))^{2} \frac{x_{i}^{2}}{r^{2}} + \frac{g^{2}(r)}{\sin_{k}^{2} r} \left(1 - \frac{x_{i}^{2}}{r^{2}}\right)$ in the above equation, we get
\begin{align*}
 \int_{\partial\Omega} \left(g(r)\frac{x_{i}}{r}\right)^{2} dA \leq \frac{1}{\mu_{i}(\Omega)} \int_{\Omega} \left((g'(r))^{2} \frac{x_{i}^{2}}{r^{2}} + \frac{g^{2}(r)}{\sin_{k}^{2} r} \left(1 - \frac{x_{i}^{2}}{r^{2}}\right) \right)dV.
\end{align*}
Summing over $i$ from $1$ to $n$ in the above equation, we get
\begin{align}\label{eqn: sum1}
 \int_{\partial\Omega} g^{2}(r) dA \leq \sum_{i=1}^{n} \frac{1}{\mu_{i}(\Omega)} \int_{\Omega} \frac{g^{2}(r)}{\sin_{k}^{2} r} dV+ \sum_{i=1}^{n} \frac{1}{\mu_{i}(\Omega)} \int_{\Omega}\left((g'(r))^{2} - \frac{g^{2}(r)}{\sin_{k}^{2} r}\right)  \frac{x_{i}^{2}}{r^{2}} dV.
\end{align}

\subsection{Proof of Theorem \ref{thm: hyperbolic space}}
Using the fact that $\sum_{i=1}^{n} x_{i}^{2} = r^{2}$ , we have
\begin{align} \nonumber
\sum_{i=1}^{n} \frac{1}{\mu_{i}(\Omega)} \frac{x_{i}^{2}}{r^{2}} &= \sum_{i=1}^{n-1} \frac{1}{\mu_{i}(\Omega)} \frac{x_{i}^{2}}{r^{2}} + \frac{1}{\mu_{n}(\Omega)} \frac{x_{n}^{2}}{r^{2}} \\ \nonumber
& = \sum_{i=1}^{n-1} \frac{1}{\mu_{i}(\Omega)} \frac{x_{i}^{2}}{r^{2}} + \frac{1}{\mu_{n}(\Omega)}\left(1 - \sum_{i=1}^{n-1} \frac{x_{i}^{2}}{r^{2}} \right)\\ \label{eqn: hyperbolic 1}
& = \sum_{i=1}^{n-1} \left(\frac{1}{\mu_{i}(\Omega)} - \frac{1}{\mu_{n}(\Omega)} \right) \frac{x_{i}^{2}}{r^{2}} + \frac{1}{\mu_{n}(\Omega)}.
\end{align}
Since $0 < g'(r) \leq \frac{g(r)}{\sinh r}$ by Lemma \ref{lem: property g}, we have $(g'(r))^{2} - \frac{g^{2}(r)}{\sinh^{2} r} \leq 0$. Thus
\begin{align} \label{eqn: hyperbolic 2}
\sum_{i=1}^{n-1} \left(\frac{1}{\mu_{i}(\Omega)} - \frac{1}{\mu_{n}(\Omega)} \right) \int_{\Omega}\left((g'(r))^{2} - \frac{g^{2}(r)}{\sinh^{2} r}\right)  \frac{x_{i}^{2}}{r^{2}} dV \leq 0.
\end{align}
Substituting \eqref{eqn: hyperbolic 1} and \eqref{eqn: hyperbolic 2} in \eqref{eqn: sum1}, we have
\begin{align} \nonumber
\int_{\partial\Omega} g^{2}(r) dA  & \leq \sum_{i=1}^{n} \frac{1}{\mu_{i}(\Omega)} \int_{\Omega} \frac{g^{2}(r)}{\sinh^{2} r} dV + \frac{1}{\mu_{n}(\Omega)} \int_{\Omega}\left((g'(r))^{2} - \frac{g^{2}(r)}{\sinh^{2} r}\right) dV \\ \nonumber
& = \sum_{i=1}^{n-1} \frac{1}{\mu_{i}(\Omega)} \int_{\Omega} \frac{g^{2}(r)}{\sinh^{2} r} dV + \frac{1}{\mu_{n}(\Omega)} \int_{\Omega}(g'(r))^{2} dV \\ \nonumber
& \leq \sum_{i=1}^{n-1} \frac{1}{\mu_{i}(\Omega)} \int_{\Omega} \frac{g^{2}(r)}{\sinh^{2} r} dV + \sum_{i=1}^{n-1} \frac{1}{\mu_{i}(\Omega)} \int_{\Omega} \frac{(g'(r))^{2}}{(n-1)} dV \\ \label{eqn: integral hyperbolic 3}
& = \frac{1}{(n-1)} \sum_{i=1}^{n-1} \frac{1}{\mu_{i}(\Omega)} \int_{\Omega} \left(\frac{n-1}{\sinh^{2} r} g^{2}(r) + (g'(r))^{2}\right) dV.
\end{align}
Define $F(r) = \frac{n-1}{\sinh^{2} r} g^{2}(r) + (g'(r))^{2}$. Since $F(r)$ is a decreasing function of $r$,
\begin{align*}
\int_{\Omega} F(r) dV &= \int_{\Omega \cap B(R)} F(r) dV + \int_{\Omega\setminus (\Omega \cap B(R))} F(r) dV \\
& \leq \int_ {B(R)} F(r) dV - \int_{B(R) \setminus (\Omega \cap B(R))} F(r) dV + \int_{\Omega\setminus (\Omega \cap B(R))} F(R) dV \\
& \leq \int_ {B(R)} F(r) dV - \int_{B(R) \setminus (\Omega \cap B(R))} F(R) dV+ \int_{\Omega\setminus (\Omega \cap B(R))} F(R) dV \\
& = \int_ {B(R)} F(r) dV.
\end{align*}
Substituting this and $\int_{\partial\Omega} g^{2}(r) dA $ from Lemma \ref{lem: integral g} in \eqref{eqn: integral hyperbolic 3}, we get
\begin{align*}
g^{2}(R) \mbox{vol} \left(S(R)\right) & \leq \frac{1}{(n-1)} \sum_{i=1}^{n-1} \frac{1}{\mu_{i}(\Omega)} \int_{B(R)} \left(\frac{n-1}{\sinh^{2} r} g^{2}(r) + (g'(r))^{2}\right) dV.
\end{align*}
This gives
\begin{align*}
\frac{1}{(n-1)} \sum_{i=1}^{n-1} \frac{1}{\mu_{i}(\Omega)} \geq \frac{ \mbox{vol} \left(S(R)\right) g^{2}(R)}{\int_{B(R)} \left(\frac{n-1}{\sinh^{2} r} g^{2}(r) + (g'(r))^{2}\right) dV} = \frac{1}{\mu_{1}(B(R))}.
\end{align*}
Since $\mu_{1}(B(R))= \mu_{2}(B(R)) = \cdots = \mu_{n}(B(R))$, we have the desired inequality. Moreover, equality holds if and only if $\Omega$ is a geodesic ball.

\subsection{Proof of Theorem \ref{thm: Euclidean space}}
Since $g(r) = \frac{r}{n}$ and $\sin_{k} r = r$ for $M = \mathbb{R}^{n}$, Equation \eqref{eqn: sum1} can be written as
\begin{align*}
 \int_{\partial\Omega} r^{2} dA \leq \mbox{vol}(\Omega) \sum_{i=1}^{n} \frac{1}{\mu_{i}(\Omega)}.
\end{align*}
 By Lemma \ref{lem: integral g}, we get
 \begin{align*}
 R^{2} \mbox{vol}(S(R)) \leq \mbox{vol}(B(R)) \sum_{i=1}^{n} \frac{1}{\mu_{i}(\Omega)}.
 \end{align*}
 Since vol$(B(R))$ = $\frac{R}{n}$ vol$(S(R))$, thus the above inequality becomes
 \begin{align*}
 n R \leq \sum_{i=1}^{n} \frac{1}{\mu_{i}(\Omega)}.
 \end{align*}
 Recall that the first $n$ nonzero Steklov eigenvalues of $B(R)$ are $\mu_{1}(B(R)) = \mu_{2}(B(R)) = \cdots = \mu_{n}(B(R))= \frac{1}{R}$. Hence we have the desired result,
 \begin{align*}
 \sum_{i=1}^{n} \frac{1}{\mu_{i}(B(R))} \leq \sum_{i=1}^{n} \frac{1}{\mu_{i}(\Omega)}
 \end{align*}
and equality hold if and only if $\Omega = B(R)$.


\begin{thebibliography}{1}
	\bibitem{BG} Binoy, G. Santhanam,  \textit{Sharp upperbound and a comparison theorem for the first nonzero Steklov eigenvalue.} Journal of Ramanujan Mathematical Society \textbf{29}(2) (2014), 133--154.
	\bibitem{B} F. Brock, \textit{An isoperimetric inequality for eigenvalues of the Stekloff problem.} Zeitschrift f\''{u}r Angewandte Mathematik und Mechanik \textbf{81}(1) (2001), 69--71.
	\bibitem{CGG} B. Colbois, A. Girouard, K. Gittins, \textit{Steklov eigenvalues of submanifolds with prescribed boundary in Euclidean space.} The Journal of Geometric Analysis \textbf{29}(2) (2019), 1811--1834.
	\bibitem{GLS} A. Girouard, R. S. Laugesen, B. A. Siudeja, \textit{Steklov eigenvalues and quasiconformal maps of simply connected planar domains.} Archive for Rational Mechanics and Analysis \textbf{219}(2)  (2016), 903--936.
	\bibitem{GP} A. Girouard, I. Polterovich, \textit{Spectral geometry of the Steklov problem (survey article).} Journal of Spectral Theory \textbf{7}(2) (2017), 321--359.
	\bibitem{HP} J. Hersch, L. E. Payne, \textit{Extremal principles and isoperimetric inequalities for some mixed problems of Stekloff's type.} Zeitschrift für angewandte Mathematik und Physik ZAMP \textbf{19}(5) (1968), 802--817.

     \bibitem{PS} L. Provenzano, J. Stubbe, \textit{ Weyl-type bounds for Steklov eigenvalues.}  Journal of Spectral Theory \textbf{9}(1) (2019), 349--377.
     \bibitem{GS} G. Santhanam, \textit{ A sharp upper bound for the first eigenvalue of the Laplacian of compact hypersurfaces in rank-$1$ symmetric spaces.}  Proceedings Mathematical Sciences \textbf{117}(3) (2007), 307--315.

	\bibitem{SV} S. Verma, \textit{Bounds for the Steklov Eigenvalues.} Archiv der Mathematik \textbf{111}(6) (2018), 657--668.
    \bibitem{SVGS} S. Verma, G. Santhanam,  \textit{Sharp bounds for Steklov eigenvalues on star-shaped domain.} To appear in Advances in Pure and Applied Mathematics.

	\bibitem{S} M. W. Stekloff, \textit{les problemes fondamentaux de la physique mathematique.} Annales Scientifiques de l'\'{E}cole Normale Sup\'{e}rieure, \textbf{19} (1902), 445--490.
	
	\bibitem{W} R. Weinstock, \textit{Inequalities for a classical Eigenvalue problem.} Journal of Rational Mechanics and Analysis \textbf{3} (1954), 745--753.
	
	
\end{thebibliography}
\end{document}